\newtheorem{theorem}{Theorem}[section]
\newtheorem{lemma}[theorem]{Lemma}
\newtheorem{corollary}[theorem]{Corollary}
\newtheorem{definition}[theorem]{Definition}
\newtheorem{assumption}[theorem]{Assumption}
\newtheorem{notation}[theorem]{Notation}
\def\digr{\mathcal{D}}
\def\trellis{\mathcal{T}}
\def\nodes{N}
\def\edges{E}
\def\trellisnodes{\mathcal{N}}
\def\trellisedges{\mathcal{E}}
\def\Rmax{\mathbb{R}_{\max}}
\begin{document}
	\pagestyle{myheadings}
	
	\title{A bound for the rank-one transient of \newline	inhomogeneous matrix products \newline in special case}
	
	\author{Arthur Kennedy-Cochran-Patrick, Serge\u{\i} Sergeev, and \v{S}tefan Bere\v{z}n\'y}
	
	\contact{Arthur}{Kennedy-Cochran-Patrick}{University of Birmingham, School of Mathematics,  Watson Building, Edgbaston, B15 2TT Birmingham. United Kingdom.}{AXC381@bham.ac.uk}
	\contact{Serge\u{\i}}{Sergeev}{University of Birmingham, School of Mathematics,  Watson Building, Edgbaston, B15 2TT Birmingham. United Kingdom.}{S.Sergeev@bham.ac.uk}
	\contact{\v{S}tefan}{Bere\v{z}n\'y}{DMTI, FEEI TUKE, N\v{e}mcovej 32, 042\,00 Ko\v{s}ice, Slovak Republic\,  and\, University of Birmingham, School of Mathematics, Watson Building, Edgbaston, B15 2TT Birmingham. United Kingdom.}{Stefan.Berezny@tuke.sk}
	
	\markboth{A. Kennedy-Cochran-Patrick, S. Sergeev, and \v{S}. Bere\v{z}n\'y}{A Bound for the rank-one transient of inhomogeneous matrix products in special case}
	
	\maketitle
	
	\begin{abstract}
		We consider inhomogeneous matrix products over max-plus algebra, where the matrices in the product satisfy certain assumptions under which the matrix products of sufficient length~are rank-one, as it was shown in 
		[6] ({\it Shue,\,Anderson,\,Dey 1998}). We establish a bound on the transient after which any product of matrices whose length exceeds that bound becomes rank-one.
	\end{abstract}
	
	\keywords{max-plus algebra, matrix product, rank-one, walk, Trellis digraph}
	
	\classification{15A80, 68R99, 16Y60, 05C20, 05C22, 05C25}

\section{Introduction}

By max-plus algebra we mean the~linear algebra developed over the~max-plus semiring $\Rmax$, which is the~set $\Rmax=\mathbb{R}\cup \{-\infty\}$ equipped  the~additive operator $a \oplus b = \max\{a,b\}$ and the~multiplicative operator $a \otimes b = a + b$.
We will be mostly interested in the~max-plus matrix multiplication $A\otimes B$ defined for any two matrices $A=(a_{i,j})$ and $B=(b_{i,j})$ with entries in $\Rmax$ of appropriate sizes by the rule
\begin{align*}
(A \otimes B)_{i,j} = \bigoplus_{1 \leq k \leq n} a_{i,k} \otimes b_{k,j} \ = \max_{1 \leq k \leq n} a_{i,k} + b_{k,j}.
\end{align*}
In particular, the $k$th max-plus power of a~square matrix $A$ is defined as
\begin{align*}
	A^{\otimes k} = \underbrace{A \otimes A \otimes \ldots \otimes A}_{(\text{$k$ times})}.
\end{align*}

A lot of work has been done on max-plus powers of a~single matrix. 
Main results of the present paper are in some relation to the~bounds on the ultimate periodicity of the~sequence of max-plus matrix powers $\{A^t\}_{t\geq 1}$, like those established in \cite{sergeev13}, \cite{sergeev14}. However, instead of max-plus powers of a~single matrix we will consider max-plus inhomogeneous matrix products of the~form $A_1\otimes A_2\ldots \otimes \ldots \otimes A_k$ where matrices $A_1,\ldots, A_k$ are taken from an~infinite matrix set $\mathcal{X}$. We will make use of the~assumptions made in~\cite{shue98} and derive a~bound for the {\em rank-one transient} of inhomogeneous products of matrices from $\mathcal{X}$ which is the~minimal $K$ such that $A_1 \otimes A_2 \otimes \ldots \otimes A_k$ for any $k \geq K$ can be represented as a~max-plus outer product $\vec{x}\otimes \vec{y}^\top$, where column vectors $\vec{x}$ and $\vec{y}$ depend on the matrix product. In Theorem \ref{main theorem}
we first obtain a sufficient condition for an inhomogenous product to be rank-one. The bound on the rank-one
transient is then obtained in Corollaries \ref{c:bound} and \ref{c:bound2}.

A practical motivation of this study comes from the switching max-plus dynamical systems of the form
$x(k+1)=A(k)\otimes x(k)$ where matrices $A(k)$ can vary. Such systems arise in some scheduling applications being
related to the way that max-plus algebra is used in modeling discrete event dynamical systems~\cite{Bac}. Let
us also note, in particular, a~recent application of switching max-plus systems of above form in the legged
locomotion of robots~\cite{Kers}, where changing matrices $A(k)$ model the switch of gaits.

\if{
 When looking at a semigroup of matrices then it is harder to determine properties of long matrix products. {\bf(todo: add references - memory-loss property, Merlet's works.)}
 However there has been some work in this area and it a piece of it will be the subject of discussion in the paper.
}\fi

This paper is based on the ideas of~\cite{shue98} where the steady state properties of max-plus inhomogeneous matrix products were considered. The~aim of~\cite{shue98} was to prove that, under certain assumptions, a~sufficiently long max-plus matrix product is rank-one and it can be written as the~outer product of two vectors. Components of these vectors are optimal weights of walks going to and from node 1 respectively.
However, it seems to us that there is an oversight in \cite[Corollary 3.1]{shue98}. This oversight is that in order to prove that the~initial and final parts of an~optimal walk are bounded in length, paper \cite{shue98} uses a~method in which one removes part of a~walk in order to create a~more optimal walk. This would be fine if the matrices were the same however since they are different then removing matrices from the~product changes the~product and one ends up working with a~different product. The result of~\cite{shue98} is also proved for a~sufficient $k$ that is large enough but no concrete bounds are established, so this invited us to look for a~bound on the length of a~max-plus inhomogeneous matrix product after which it becomes an~outer product of two vectors.
Such bound is the~main result of this paper.

The structure of this paper will be as follows. Chapter 2 defines the key ideas and notation that will be used throughout the~paper. In Chapter 3 we introduce and prove the~lemmas required to prove the~main theorem. Chapter 4 contains the~proof of the~main theorem as well as corollaries that follow from the~theorem one being a coarser bound on $k$. Finally, Chapter 5 presents an~example which demonstrates a~long enough inhomogeneous matrix product which is an~outer max-plus product of two vectors.


\section{Definitions and assumptions}

\subsection{Walks and digraphs}

The aim of this subsection is to introduce some important definitions concerning
1) directed weighted graphs, associated with a matrix and 2) trellis digraphs associated with
inhomogeneous matrix products. Note that Definitions~\ref{def:digraphs2} and~\ref{def:walks} are standard~\cite{butkovic08}, and Definition~\ref{def:geom-equiv} follows~\cite{shue98}.

\begin{definition}
\label{def:digraphs}
	{\rm
	A \emph{directed graph} \emph{(digraph)} is a~pair $(\nodes,\edges)$ where $\nodes$ is a~finite set of nodes and $\edges \subseteq \nodes \cross \nodes = \{ (i,j)\colon i,j \in \nodes \}$ is the~set of edges where $(i,j)$ is a~directed edge from node $i$ to node $j$. 
	
	A \emph{weighted digraph} is a~digraph with associated weights $w_{i,j} \in \Rmax$ for each edge $(i,j)$ in the~digraph.
	}
\end{definition}
\begin{definition}
\label{def:digraphs2}	
	{\rm
	A \emph{digraph associated with a~square matrix $A$} is a~digraph $\digr_A = (\nodes_A, \edges_A)$ where the~set $\nodes_A$ has the same number of elements as the number of rows or columns in the~matrix $A$. The~set $\edges_A \subseteq \nodes_A \cross \nodes_A$ is the~set of edges in $\digr_A$ where the~weight of each edge $(i,j)$ is associated with the~respective entry in the~matrix $A$, i.\,e. $w_{i,j} = a_{i,j} \in \Rmax$. If an~entry in the matrix is negative infinity, this means that there is no edge connecting those nodes in that direction.
	}
\end{definition}
\begin{definition}
\label{def:geom-equiv}
	{\rm
	Matrices $A,B\in\Rmax^{n\times n}$ are called \emph{geometrically equivalent} if $E_A=E_B$.
	}
\end{definition}
\begin{definition}
\label{def:walks}
	{\rm
	A sequence of nodes $W=(i_0,\ldots,i_l)$ is called a~\emph{walk on a~weighted digraph} $D=(\nodes,\edges)$ if
	$(i_{s-1},i_s)\in \edges$ for each $s\colon 1\leq s \leq l$.
	This walk is a~\emph{cycle} if the start node $i_0$ and the~end node $i_l$ are the~same.
	It is a~\emph{path} if no two nodes in $i_0,\ldots, i_l$ are the~same. The~\emph{length} of $W$ is $l(W)=l$.  The~\emph{weight} of $W$ is defined as the~max-plus product (i.\,e., the~usual arithmetic sum) of the~weights of each edge $(i_{s-1},i_s)$ traversed throughout the walk, and it is denoted by $p_D(W)$. Note that a~sequence $W=(i_0)$ is also a~walk (without edges), and we assume that it has weight and length $0$.
	
A digraph is \emph{strongly connected} if for any two nodes $i$ and $j$ there exists a walk connecting $i$ to $j$. A matrix is \emph{irreducible} if the graph associated with it in the sense of Definition~\ref{def:digraphs2} is strongly connected.
	}
\end{definition}
\begin{definition}
	{\rm
	The \emph{trellis digraph} $\trellis_{\Gamma(k)} = (\trellisnodes, \trellisedges)$ associated with the~product $\Gamma(k)=A_1\otimes A_2\otimes\ldots\otimes A_k$ is the~digraph with the~set of nodes $\trellisnodes$ and the~set of edges $\trellisedges$, where:
	\begin{itemize}
		\item[(1)] $\trellisnodes$ consists of $k+1$ copies of
		$\nodes$ which are denoted $\nodes_0,\ldots, \nodes_k$, and the~nodes in $\nodes_l$ for each $0\leq l\leq k$ are denoted by $1:l,\ldots, n:l$;
		\item[(2)] $\trellisedges$ is defined by the following rules:
		\begin{itemize}
			\item[a)] there are edges only between $\nodes_l$ and $\nodes_{l+1}$ for each $l$,
			\item[b)] we have $(i:(l-1),j:l)\in \trellisedges$ if and only if $(i,j)$ is an edge of $\digr_{A_l}$, and the~weight of that edge is $(A_l)_{i,j}$.
		\end{itemize}
	\end{itemize}
	The weight of a walk $W$ on $\trellis_{\Gamma(k)}$ is denoted by $p_{\trellis}(W)$.
	}
\end{definition}
\begin{definition}
	{\rm
	Consider a~trellis digraph $\trellis_{\Gamma(k)}$.
	
	By an~\emph{initial walk} connecting $i$ to $j$ on  $\trellis_{\Gamma(k)}$ we mean a~walk on $\trellis_{\Gamma(k)}$ connecting node $i:0$ to $j:m$ where $m$ is the first and the last time the walk arrives at node $j$ and is such that $0\leq m\leq k$.
	
	By a~\emph{final walk} connecting $i$ to $j$ on  $\trellis_{\Gamma(k)}$ we mean a~walk on $\trellis_{\Gamma(k)}$  connecting node $i:l$ to $j:k$, where $l$ is the first and the last time the walk leaves node $i$ and is such that $0\leq l\leq k$.
	
	A~\emph{full walk} connecting $i$ to $j$ on $\trellis_{\Gamma(k)}$ is a~walk on $\trellis_{\Gamma(k)}$ connecting node $i:0$ to $j:k$.
	}
\end{definition}

\subsection{Key notations}
Here we will introduce the~notation that will be used throughout the~paper. We begin by introducing
the following two matrices.

\begin{notation}
	{\rm The ``boundaries'' of $\mathcal{X}$:
	\begin{itemize} \labelsep0.05cm \itemindent0.0cm
	    \item[$A^{\sup}$:]  the~entrywise \emph{supremum over all matrices} in $\mathcal{X}$. More precisely,
	    $A^{\sup}_{ij}=\sup\limits_{X\in\mathcal{X}} (X)_{ij}.$ In max-plus matrix notation,
	    \[
	    A^{\sup}=\bigoplus_{X\in \mathcal{X}} X.
	    \]
	    The weight of a walk $W$ on $\digr_{A^{\sup}}$ will be denoted by $p_{\sup}(W)$.
	    \item[$A^{\inf}$:]  the~entrywise \emph{infimum over all matrices} in $\mathcal{X}$. More precisely,
	    $A^{\inf}_{ij}=\inf\limits_{X\in\mathcal{X}} (X)_{ij}.$
	\end{itemize}
	}
\end{notation}

We now introduce a number of useful parameters. The first group of parameters relates to $\digr_{A^{\sup}}$ and
$\digr_{A^{\inf}}$, and the second to $\trellis_{\Gamma(k)}$.

\begin{notation}
	{\rm
	$\lambda^{\ast}$: the largest cycle mean in the submatrix  $(A^{\sup}_{i,j})_{i,j \neq 1}$:
	\begin{equation*}
	\lambda^*=\max\limits_{k\geq 1}\left( \max\limits_{2\leq i_1,\ldots,i_k\leq n} \frac{A^{\sup}_{i_1i_2}+\ldots+ A^{\sup}_{i_ki_1}}{k} \right).
	\end{equation*}
	}
\end{notation}

\begin{notation}
	{\rm
	Weights of some paths and walks on $\digr_{A^{\sup}}$ and $\digr_{A^{\inf}}$:
	\begin{itemize}
	    \item[$\alpha_{i}$:] the maximal weight of paths on $\digr_{A^{\sup}}$  connecting $i$ to $1$;
	    \item[$\beta_{j}$:] the maximal weight of paths on $\digr_{A^{\sup}}$ connecting $1$ to $j$;
	    \item[$\gamma_{ij}$:] the maximal weight of paths on $\digr_{A^{\sup}}$ connecting $i$ to $j$ and not going through node $1$;
	    \item[$w_{i}$:] the maximal weight of walks of length not exceeding $k$ on
			$\digr_{A^{\inf}}$ connecting $i$ to $1$;
	    \item[$v_{j}$:] the maximal weight of walks of length not exceeding $k$ on
			$\digr_{A^{\inf}}$ connecting $1$ to $j$.
	\end{itemize}
	}
\end{notation}

\begin{notation}
	{\rm
	Weight of optimal walks on $\trellis_{\Gamma(k)}$:
	\begin{itemize} \labelsep0.05cm \itemindent0.2cm
		\item[$w_{i}^{\ast}\colon$] the maximal weight of initial walks on $\trellis_{\Gamma(k)}$ connecting $i$ to $1$;
		\item[$v_{j}^{\ast}\colon$] the maximal weight of final walks on $\trellis_{\Gamma(k)}$ connecting $1$ to $j$.
	\end{itemize}
	}
\end{notation}
Note that the length of any walk on $\trellis_{\Gamma(k)}$ does not exceed $k$.

\subsection{Key assumptions}
We will use the following main assumptions, which are very similar to those of~\cite{shue98}.
\begin{assumption} \label{a:geom}
	{\rm
	The matrices $A_{i}, \: i\in {1,...,k}$ are chosen from a~set $\mathcal{X}$
	of~geometrically equivalent irreducible matrices, and the matrix $A^{\inf}$ is also geometrically equivalent to any of them.
	}
\end{assumption}

\begin{assumption} \label{a:oneloop}
	{\rm
	The digraph of each matrix in the set $\mathcal{X}$ has a unique critical cycle of length $1$ at node $1$
	with weight $0$.
	}
\end{assumption}

\begin{assumption} \label{a:suploop}
	{\rm
	The digraph of the matrix $A^{\sup}$
	has a unique critical cycle of~length $1$ at node $1$ of weight $0$.
	}
\end{assumption}

Note that, if the~unique critical cycle of length one is at any other node than $1$, then none of our main results
will change significantly.
However, the~given assumptions are still very limiting in terms of the~type of matrix and the~real world situation that this can apply to.

\section{Preliminary lemmas}

The aim of this section is to prove some preliminary lemmas which will help us to
construct the terms in the bound of Theorem~\ref{main theorem} and its corollaries. The main ideas are that
the lengths of optimal initial and final walks are bounded (Lemmas~\ref{l:bound-initial} and~\ref{l:bound-final})
and that after some transient on the length any optimal full walk should pass through node $1$ (Lemma~\ref{l:bound-full}).

\begin{lemma} \label{l:bound-initial}
	Let $W_1$ be an optimal initial walk on trellis digraph $\trellis_{\Gamma(k)}$ connecting $i$ to $1$. Then we have the~following upper bound on its length:
	\begin{equation}
		l(W_1)\leq \frac{w_i^*-\alpha_i}{\lambda^{\ast}}+(n-1).
	\end{equation}
\end{lemma}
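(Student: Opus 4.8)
The plan is to transfer the problem from the trellis $\trellis_{\Gamma(k)}$ to the static digraph $\digr_{A^{\sup}}$, decompose the underlying walk into a simple path plus cycles, and then exploit the fact that every cycle avoiding node $1$ has strictly negative mean. First I would fix an optimal initial walk $W_1$ from $i$ to $1$, so that $p_{\trellis}(W_1)=w_i^{\ast}$ and $l(W_1)=m$, and read off its node sequence $i=i_0,i_1,\ldots,i_m=1$. Since the edge used at step $s$ is an entry of $A_s$ and is therefore bounded above by the corresponding entry of $A^{\sup}$, the walk with the same node sequence on $\digr_{A^{\sup}}$ satisfies $w_i^{\ast}=p_{\trellis}(W_1)\leq p_{\sup}(W_1)$. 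By the definition of an initial walk, node $1$ occurs in this sequence only at the final position, so the walk avoids node $1$ at every intermediate level.

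Next I would apply the standard walk decomposition on $\digr_{A^{\sup}}$: repeatedly excising a cycle whenever a node repeats yields a simple path $P$ from $i$ to $1$ together with a collection of cycles, with additive weights and additive lengths,
\begin{equation*}
p_{\sup}(W_1)=p_{\sup}(P)+\sum_c p_{\sup}(c), \qquad m=l(P)+\sum_c l(c).
\end{equation*}
Because node $1$ appears only once in $W_1$, no excised cycle passes through node $1$; hence each such cycle $c$ has mean at most $\lambda^{\ast}$, giving $p_{\sup}(c)\leq \lambda^{\ast}\,l(c)$. The path $P$ is simple and ends at node $1$, so it connects $i$ to $1$ without revisiting $1$, whence $p_{\sup}(P)\leq \alpha_i$ and $l(P)\leq n-1$. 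Combining these estimates yields
\begin{equation*}
w_i^{\ast}\leq p_{\sup}(P)+\sum_c p_{\sup}(c)\leq \alpha_i+\lambda^{\ast}\bigl(m-l(P)\bigr).
\end{equation*}

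The decisive point, and the step I expect to be the main obstacle, is pinning down the sign of $\lambda^{\ast}$. Assumption~\ref{a:suploop} states that the unique critical (maximum-mean) cycle of $\digr_{A^{\sup}}$ is the loop at node $1$ of weight $0$; since $\lambda^{\ast}$ is the maximum mean taken only over cycles avoiding node $1$, uniqueness forces $\lambda^{\ast}<0$. Dividing the displayed inequality by the negative quantity $\lambda^{\ast}$ therefore reverses it, producing $m-l(P)\leq (w_i^{\ast}-\alpha_i)/\lambda^{\ast}$, and substituting $l(P)\leq n-1$ gives the claimed bound $l(W_1)=m\leq (w_i^{\ast}-\alpha_i)/\lambda^{\ast}+(n-1)$. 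The only loose end is the degenerate case in which the subdigraph on nodes $2,\ldots,n$ is acyclic, so that $\lambda^{\ast}=-\infty$ and no cycle can be excised; then $W_1$ is already a path and $m\leq n-1$, which remains consistent with the stated bound under the usual max-plus conventions.
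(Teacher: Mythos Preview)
Your proof is correct and follows essentially the same approach as the paper: transfer $W_1$ to $\digr_{A^{\sup}}$, decompose into a simple path plus cycles avoiding node~$1$, invoke $\lambda^{\ast}<0$ from Assumption~\ref{a:suploop}, and extract the length bound. The paper states the resulting inequality $p_{\trellis}(W_1)\leq\alpha_i+(l(W_1)-(n-1))\lambda^{\ast}$ more tersely (without spelling out the path--cycle decomposition) and finishes by contradiction, whereas you make the decomposition explicit and argue directly; the substance is the same, and your treatment of the degenerate acyclic case is a detail the paper omits.
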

\begin{Proof}
	 Due to Assumptions~\ref{a:oneloop} and \ref{a:suploop} we have $\lambda^{\ast}<0$.  The~weight of any optimal walk $W_1$ connecting $i$ to $1$ is less than or equal to that of a~path $P_1$ connecting $i$ to $1$ on $\digr_{A^{\sup}}$ plus the~remaining length multiplied by $\lambda^{\ast}<0$. Thus
	\begin{equation*}
	  p_{\trellis}(W_1) \leq p_{\sup}(P_1) + (l(W_1)-(n-1))\lambda^{\ast}.
	\end{equation*}
	Next we bound $p_{\sup}(P_1)\leq\alpha_i$, hence
	\begin{equation}
	\label{e:bound1}
	  p_{\trellis}(W_1)\leq \alpha_i + (l(W_1)-(n-1))\lambda^{\ast}.
	\end{equation}
	Now assume by contradiction that $l(W_1) > \frac{w_i^*-\alpha_i}{\lambda^{\ast}}+(n-1)$.
	However, this is equivalent to
	\begin{equation}
	\label{e:bound11}
	\alpha_i + (l(W_1)-(n-1))\lambda^{\ast}< w_i^*.
\end{equation}
Combining~\eqref{e:bound1} with~\eqref{e:bound11} we obtain
$p_{\trellis}(W_1)<w_i^*$ meaning that $W_1$ is not optimal, a~contradiction.
The proof is complete.
\end{Proof}

\bigskip

We now state an analogous lemma on the~length of~an~optimal final walk. The proof is similar and will be omitted.

\begin{lemma} \label{l:bound-final}
	Let $W_2$ be an~optimal final walk on trellis digraph $\trellis_{\Gamma(k)}$ connecting $1$ to $j$. Then we have the following upper bound on its length:
	\begin{equation}
		l(W_2)\leq \frac{v_j^*-\beta_j}{\lambda^{\ast}}+(n-1).
	\end{equation}
\end{lemma}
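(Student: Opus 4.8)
The plan is to mirror the proof of Lemma~\ref{l:bound-initial} with the roles of initial and final walks interchanged, so I begin by recalling the structural estimate underlying that proof. Just as any optimal initial walk connecting $i$ to $1$ decomposes into a path on $\digr_{A^{\sup}}$ plus excess length contributing at rate $\lambda^{\ast}$, any optimal final walk $W_2$ connecting $1$ to $j$ should admit the same decomposition read in the forward direction: its weight is bounded above by the weight of a path $P_2$ connecting $1$ to $j$ on $\digr_{A^{\sup}}$ plus the remaining length beyond $n-1$ multiplied by $\lambda^{\ast}<0$. The inequality $\lambda^{\ast}<0$ again follows from Assumptions~\ref{a:oneloop} and~\ref{a:suploop}.

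First I would write down the analogue of~\eqref{e:bound1}, namely
\begin{equation*}
	p_{\trellis}(W_2)\leq p_{\sup}(P_2) + (l(W_2)-(n-1))\lambda^{\ast}.
\end{equation*}
Next I would bound $p_{\sup}(P_2)\leq\beta_j$, since $\beta_j$ is by definition the maximal weight of paths on $\digr_{A^{\sup}}$ connecting $1$ to $j$, obtaining
\begin{equation*}
	p_{\trellis}(W_2)\leq \beta_j + (l(W_2)-(n-1))\lambda^{\ast}.
\end{equation*}
Then I would argue by contradiction: assuming $l(W_2) > \frac{v_j^*-\beta_j}{\lambda^{\ast}}+(n-1)$, and using $\lambda^{\ast}<0$ to flip the inequality when multiplying through, this is equivalent to $\beta_j + (l(W_2)-(n-1))\lambda^{\ast}< v_j^*$. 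Combining the two displays gives $p_{\trellis}(W_2)<v_j^*$, contradicting the optimality of $W_2$ since $v_j^*$ is the maximal weight of final walks connecting $1$ to $j$.

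The one genuine point requiring care, and the step I expect to be the main obstacle, is justifying the decomposition bound in the \emph{final}-walk setting. In the initial-walk case the walk is read from $i:0$ to $1:m$, and the argument extracts an underlying path on $\digr_{A^{\sup}}$ whose length is at most $n-1$, attributing the surplus length to cycles whose mean weight on the nonfirst-node submatrix is bounded by $\lambda^{\ast}$. For the final walk from $1:l$ to $j:k$ one must verify that the same surplus-length analysis applies symmetrically: each edge of $W_2$ lies in some $\digr_{A^{\sup}}$ (since entries of each $A_l$ are dominated by $A^{\sup}$), and the portion of the walk exceeding a simple path decomposes into cycles not through node $1$, whose total weight is at most $\lambda^{\ast}$ times their combined length. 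Because the definition of a final walk ensures node $1$ is left only once at the start, the intermediate nodes avoid node $1$ and the estimate by $\lambda^{\ast}$ is legitimate. Once this symmetric decomposition is established the remaining algebra is identical to Lemma~\ref{l:bound-initial}, which is precisely why the authors state that the proof is similar and omit it.
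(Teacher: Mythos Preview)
Your proposal is correct and follows exactly the approach the paper intends: it is the mirror image of the proof of Lemma~\ref{l:bound-initial} with $(\alpha_i,w_i^{\ast},\text{initial walk})$ replaced by $(\beta_j,v_j^{\ast},\text{final walk})$, which is precisely why the authors write that the proof is similar and omit it. Your final paragraph, observing that the definition of a final walk forces node~$1$ to be left exactly once so that all excess cycles live in the submatrix $(A^{\sup}_{i,j})_{i,j\neq 1}$ and are bounded by $\lambda^{\ast}$, is a useful elaboration that the paper leaves implicit even in the proof of Lemma~\ref{l:bound-initial}.
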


\begin{lemma} \label{l:bound-full}
	Let
	\begin{equation}
	    k > \frac{w_{i}^{\ast}-\alpha_{i}+v_{j}^{\ast}-\beta_{j}}{\lambda^{\ast}} +2(n-1).
	\end{equation}
	Then any optimal full walk $W$ connecting $i$ to $j$ on $\trellis_{\Gamma(k)}$ and going through node $1$ is decomposed as,
	\begin{align*}
	    W = W_{1} \circ C \circ W_{2}
	\end{align*}
	where $W_1$ is an~optimal initial walk and $W_2$ is an~optimal final walk
	which satisfy
	\begin{align*}
	    l(W_{1}) & \leq  \frac{w_{i}^{\ast}-\alpha_{i}}{\lambda^{\ast}} +(n-1), \\
	    l(W_{2}) & \leq  \frac{v_{j}^{\ast}-\beta_{j}}{\lambda^{\ast}} +(n-1),
	\end{align*}
	$C$ consists of several loops $1 \to 1$  and
	\begin{align*}
	    p_{\trellis}(W) = w_{i}^{\ast} +v_{j}^{\ast}.
	\end{align*}
\end{lemma}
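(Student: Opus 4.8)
The plan is to decompose $W$ at its first and last visits to node $1$ and then pin down the weights of the three pieces by a squeezing argument. Concretely, since $W$ passes through node $1$, let $m_1$ be the smallest and $m_2$ the largest level at which $W$ meets node $1$. Set $W_1$ to be the portion of $W$ from $i:0$ to $1:m_1$, let $C$ be the portion from $1:m_1$ to $1:m_2$, and let $W_2$ be the portion from $1:m_2$ to $j:k$. By the choice of $m_1$ and $m_2$, $W_1$ is an initial walk connecting $i$ to $1$ and $W_2$ is a final walk connecting $1$ to $j$, so that $p_{\trellis}(W_1)\le w_i^*$ and $p_{\trellis}(W_2)\le v_j^*$ by the definitions of $w_i^*$ and $v_j^*$; moreover $C$ is a closed walk at node $1$.

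The first key estimate is $p_{\trellis}(C)\le 0$, and more precisely that $p_{\trellis}(C)=0$ can only occur when $C$ is made up of loops $1\to1$. For this I would pass from the trellis to $\digr_{A^{\sup}}$: since every factor satisfies $A_l\le A^{\sup}$ entrywise, the weight of any trellis walk is bounded above by the weight of its projection on $\digr_{A^{\sup}}$. The projection of $C$ is a closed walk at node $1$, which decomposes into cycles; by Assumption~\ref{a:suploop} the maximum cycle mean of $A^{\sup}$ equals $0$ and is attained only by the loop at node $1$, so each cycle has nonpositive weight, and a cycle has weight $0$ exactly when it is that loop. Hence $p_{\sup}(\mathrm{proj}(C))\le 0$, with equality iff the projection consists solely of loops at node $1$; in particular $p_{\trellis}(C)\le 0$. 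Combining the three bounds yields $p_{\trellis}(W)=p_{\trellis}(W_1)+p_{\trellis}(C)+p_{\trellis}(W_2)\le w_i^*+v_j^*$.

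For the reverse inequality I would exhibit a full walk of weight $w_i^*+v_j^*$. Take an optimal initial walk achieving $w_i^*$ (of length $\ell_1$), then loops at node $1$, then an optimal final walk achieving $v_j^*$ (of length $\ell_2$). By Lemmas~\ref{l:bound-initial} and~\ref{l:bound-final} together with the hypothesis on $k$ we have $\ell_1+\ell_2\le \frac{w_i^*-\alpha_i+v_j^*-\beta_j}{\lambda^*}+2(n-1)<k$, so there is room to insert $k-\ell_1-\ell_2\ge 1$ loops $1\to1$ (each of weight $0$ by Assumption~\ref{a:oneloop}) between the two pieces, producing a genuine full walk of length $k$ and weight $w_i^*+v_j^*$. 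As $W$ is optimal among full walks, this forces $p_{\trellis}(W)=w_i^*+v_j^*$. Feeding this equality back into the inequality chain squeezes all three inequalities into equalities: $p_{\trellis}(W_1)=w_i^*$, $p_{\trellis}(W_2)=v_j^*$ and $p_{\trellis}(C)=0$. Thus $W_1$ is an optimal initial walk and $W_2$ an optimal final walk, and the claimed length bounds follow directly from Lemmas~\ref{l:bound-initial} and~\ref{l:bound-final}; finally $p_{\trellis}(C)=0$ together with the refined statement above forces $C$ to consist entirely of loops $1\to1$, completing the decomposition and giving $p_{\trellis}(W)=w_i^*+v_j^*$.

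The step I expect to be the main obstacle is the refined cycle analysis showing that $p_{\trellis}(C)=0$ cannot be realized by any excursion away from node $1$. This is where Assumptions~\ref{a:oneloop} and~\ref{a:suploop} (uniqueness of the critical cycle at node $1$) are indispensable: one must check that the passage to $\digr_{A^{\sup}}$ preserves strictness, i.e.\ that every closed walk at node $1$ using some node other than $1$ has strictly negative weight, so that keeping the total at $0$ is possible only with loops. The achievability construction is conceptually easier but also needs care, since the optimal initial and final walks must be placed so as not to overlap in the levels of the trellis, which is exactly what the length bound derived from the hypothesis on $k$ guarantees.
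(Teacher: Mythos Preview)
Your proof is correct and follows essentially the same route as the paper: decompose $W$ at the first and last visits to node~$1$, construct a comparison full walk $V_1\circ C'\circ V_2$ of weight $w_i^*+v_j^*$ using Lemmas~\ref{l:bound-initial} and~\ref{l:bound-final} together with the hypothesis on $k$, and squeeze. The only cosmetic difference is ordering: the paper first argues (by replacing the middle segment with loops $(1,1)$ and invoking Assumption~\ref{a:suploop}) that optimality of $W$ already forces $C$ to consist of loops, and then runs the comparison to pin down $W_1,W_2$; you instead bound $p_{\trellis}(C)\le 0$ via projection to $\digr_{A^{\sup}}$, squeeze all three inequalities simultaneously, and recover that $C$ is pure loops from the equality case afterwards.
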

\begin{Proof}
	Let $W$ be an~optimal full walk connecting $i$ to $j$ that traverses node $1$ at least once.
	Note first that all edges between the~first and the~last occurrence of $1$ in $W$ can be replaced with the~copies of $(1,1)$, since these edges are present in every matrix $X_{\alpha}$ from $\mathcal{X}.$ 
	Assumption~\ref{a:suploop} implies that this leads to a~strict increase of the~weight, therefore we must have $W=\Tilde{W}_{1} \circ \Tilde{C} \circ \Tilde{W}_{2},$ where $\Tilde{C}$ consists of~several edges $(1,1)$, $\Tilde{W}_1$ is an~initial walk from $i$ to $1$ and $\Tilde{W}_2$ is a~final walk from $1$ to $j$. We have $p_{\trellis}(\Tilde{C})=0$, so $p_{\trellis}(W)=p_{\trellis}(\Tilde{W}_1)+p_{\trellis}(\Tilde{W}_2)$.
	
	Now we note that by Lemmas~\ref{l:bound-initial} and~\ref{l:bound-final} the~length $k$ is sufficient for constructing a~walk $W'=V_{1} \circ C' \circ V_{2}$ where $V_1$ is an~optimal initial walk from $i$ to $1$, $C'$ consists of~several copies of $(1,1)$ and
	$V_2$ is an~optimal final walk from $1$ to $j$. The weight of~this walk is $w_i^*+v_j^*$.
	
	By the optimality of $V_1$ and $V_2$ we have $p_{\trellis}(\Tilde{W}_1)\leq p_{\trellis}(V_1)$ and $p_{\trellis}(\Tilde{W}_2)\leq p_{\trellis}(V_2)$. Since $W$ is optimal, both inequalities should hold with equality.
	
	That is, $\Tilde{W}_1$ is an~optimal initial walk connecting $i$ to $1$ and $\Tilde{W}_2$ is an~optimal final walk connecting $1$ to $j$, so that $\Tilde{W}_1$, $\Tilde{W}_2$
	and $\Tilde{C}$ can be taken for $W_1$, $W_2$ and $C$ respectively.
	The proof is complete.
\end{Proof}

\begin{lemma} \label{i:ineq}
	Let
	\begin{align} \label{e:ineq}
	    k > \frac{w_{i}^{\ast} +v_{j}^{\ast} -\gamma_{i,j}}{\lambda^{\ast}} +(n-1) .
	\end{align}
	Then any full walk $W$ connecting $i$ to $j$ on $\trellis_{\Gamma(k)}$ that does not go through node $1$ has weight smaller than $w_i^{\ast}+v_j^{\ast}$.
\end{lemma}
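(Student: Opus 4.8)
The plan is to dominate the trellis weight of $W$ by a weight on $\digr_{A^{\sup}}$ and then exploit two facts: that $W$ avoids node $1$, so it lives entirely in the subgraph on nodes $\{2,\ldots,n\}$ whose largest cycle mean is $\lambda^*$, and that $\lambda^*<0$ by Assumptions~\ref{a:oneloop} and~\ref{a:suploop}. First I would observe that, since every factor $A_l$ in the product is dominated entrywise by $A^{\sup}$, the weight of $W$ on the trellis is at most the weight $p_{\sup}(\overline{W})$ of the projected walk $\overline{W}$ on $\digr_{A^{\sup}}$, where $\overline{W}$ connects $i$ to $j$, has length exactly $k$, and still passes through no copy of node $1$.

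Next I would apply the standard path-plus-cycles decomposition to $\overline{W}$: repeatedly extracting cycles, one writes $\overline{W}$ as a path $P$ from $i$ to $j$ together with a family of cycles, all of which lie in the subgraph on $\{2,\ldots,n\}$ because $\overline{W}$ avoids node $1$. The path satisfies $p_{\sup}(P)\le\gamma_{ij}$ and $l(P)\le n-1$ by definition of $\gamma_{ij}$, while each extracted cycle has mean at most $\lambda^*$, so the total weight of the cycles is at most $(k-l(P))\lambda^*$. This yields
\begin{equation*}
p_{\trellis}(W)\le p_{\sup}(\overline{W})\le \gamma_{ij}+(k-l(P))\lambda^*.
\end{equation*}

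Finally, because $\lambda^*<0$ and $l(P)\le n-1$, enlarging $l(P)$ to $n-1$ only increases the right-hand side, so $p_{\trellis}(W)\le \gamma_{ij}+(k-(n-1))\lambda^*$. Multiplying the hypothesis~\eqref{e:ineq} through by $\lambda^*<0$ (which reverses the inequality) shows it is exactly equivalent to $\gamma_{ij}+(k-(n-1))\lambda^*<w_i^*+v_j^*$, and combining the two displays gives $p_{\trellis}(W)<w_i^*+v_j^*$, as claimed.

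The main obstacle I anticipate is bookkeeping rather than conceptual: one must make the cycle extraction precise enough to guarantee that every removed cycle genuinely avoids node $1$ (so that its mean is legitimately bounded by $\lambda^*$ rather than by a possibly larger global cycle mean), and one must keep careful track of inequality directions when dividing or multiplying by the negative quantity $\lambda^*$. Once these are handled, the estimate is a direct matching of the bound on $k$ against the weight inequality.
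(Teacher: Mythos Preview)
Your argument is correct and follows essentially the same route as the paper's proof: bound the trellis weight by the weight of the projected walk on $\digr_{A^{\sup}}$, decompose that walk into a path (whose weight is at most $\gamma_{ij}$ and whose length is at most $n-1$) plus cycles avoiding node~$1$ (each with mean at most $\lambda^*<0$), and then observe that the hypothesis on $k$ is exactly the inequality $\gamma_{ij}+(k-(n-1))\lambda^*<w_i^*+v_j^*$. Your write-up is in fact more explicit than the paper's about the cycle-extraction step and the sign-reversal when passing through $\lambda^*$, but the two proofs are the same in substance.
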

\begin{Proof}
	 Due to Assumption 3.1 and 3.2 the~weight of any walk $p_{\trellis}(W)$ connecting $i \to j$ and not going through $1$ will be less than or equal to the~weight of a~path $P$ on $\digr_{A^{\sup}}$ going from $i$ to $j$ plus the~remaining length multiplied by $\lambda^{\ast}$:
	\begin{align} \label{e:bound-first}
	    p_{\trellis}(W) \leq p_{\sup}(P) + (k-(n-1))\lambda^{\ast}.
	\end{align}
	As $P$ is a path from $i \to j$, its weight is bounded above by $\gamma_{ij}$. Therefore \begin{align} \label{e:bound3}
	    p_{\sup}(P) + (k-(n-1))\lambda^{\ast} \leq \gamma_{ij} + (k-(n-1))\lambda^{\ast}.
	\end{align}
	We now see that~\eqref{e:ineq} is equivalent to
	\begin{align} \label{e:bound4}
		\gamma_{ij} + (k-(n-1))\lambda^{\ast} < w_i^*+v_j^*.
	\end{align}
	Combining~\eqref{e:bound-first},\eqref{e:bound3} and~\eqref{e:bound4} we see that $p_{\trellis}(W)<w_i^{\ast}+v_j^{\ast}$, thus the~proof is complete.\\ \makebox{}
\end{Proof}
\enlargethispage{5mm}

\section{Main results}
Now we can move on to the~main theorem of the~paper, with its modifications and corollaries.
\begin{theorem} \label{main theorem}
	Let $\Gamma(k)$ be an~inhomogenous max-plus matrix product $\Gamma(k) = A_1 \otimes A_2 \otimes \ldots \otimes A_k$ with $k$ satisfying
	\begin{align} \label{main condition}
		k > \max \Big( \frac{w_{i}^{\ast} + v_{j}^{\ast} - \gamma_{ij}}{\lambda^{\ast}} + (n-1), \frac{w_{i}^{\ast} - \alpha_{i} + v_{j}^{\ast} - \beta_{j}}{\lambda^{\ast}} + 2(n-1)) \Big)
	\end{align}
	for some $i,j\in\nodes$, then
	\begin{align*}
	    \Gamma(k)_{i,j} & = \Gamma(k)_{i,1} \otimes \Gamma(k)_{1,j} \\
	    & = \Gamma(k)_{i,1} + \Gamma(k)_{1,j}.
	\end{align*}
\end{theorem}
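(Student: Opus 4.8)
The plan is to read the matrix entry $\Gamma(k)_{i,j}$ as an optimization over walks on the trellis and then split that optimization according to whether an optimal walk passes through node $1$. I would start from the fundamental max-plus identity: $\Gamma(k)_{i,j}$ equals the maximal weight $p_{\trellis}(W)$ taken over all full walks $W$ connecting $i$ to $j$ on $\trellis_{\Gamma(k)}$, i.e. over all walks from $i\colon 0$ to $j\colon k$. Since the graphs of all matrices in $\mathcal{X}$ are irreducible (Assumption~\ref{a:geom}) and each carries the loop $(1,1)$, full walks of length $k$ from $i$ to $j$ exist and $\Gamma(k)_{i,j}$ is finite, so the defining maximum is attained. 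Every full walk either passes through node $1$ at some layer or avoids node $1$ entirely, hence $\Gamma(k)_{i,j}$ is the larger of the two corresponding constrained maxima.

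Next I would evaluate the maximum over full walks through node $1$. The condition~\eqref{main condition} forces $k$ to exceed the bound of Lemma~\ref{l:bound-full}, so any optimal full walk through node $1$ decomposes as $W_1\circ C\circ W_2$ with $p_{\trellis}(W)=w_i^{\ast}+v_j^{\ast}$; moreover the construction $V_1\circ C'\circ V_2$ appearing in that lemma exhibits a through-$1$ full walk of length $k$ attaining this value, so the constrained maximum equals $w_i^{\ast}+v_j^{\ast}$. The condition~\eqref{main condition} also forces $k$ to exceed the bound of Lemma~\ref{i:ineq}, whence every full walk avoiding node $1$ has weight strictly below $w_i^{\ast}+v_j^{\ast}$. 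Comparing the two constrained maxima yields $\Gamma(k)_{i,j}=w_i^{\ast}+v_j^{\ast}$.

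The step I expect to be the crux is identifying the two factors, namely $\Gamma(k)_{i,1}=w_i^{\ast}$ and $\Gamma(k)_{1,j}=v_j^{\ast}$, since this is where the single-loop assumptions must be invoked directly rather than through the lemmas. For the upper bound on $\Gamma(k)_{i,1}$, I would take any full walk from $i\colon 0$ to $1\colon k$ and let $m$ be the first layer at which it reaches node $1$: its prefix is an initial walk of weight at most $w_i^{\ast}$, while its suffix is a closed walk at node $1$. By entrywise domination the suffix weight is bounded by that of the corresponding closed walk on $\digr_{A^{\sup}}$, which decomposes into cycles each of nonpositive mean, since by Assumption~\ref{a:suploop} the unique critical cycle of $A^{\sup}$ is the zero-weight loop at node $1$ and the maximum cycle mean of $A^{\sup}$ is $0$; hence the suffix has weight at most $0$ and the full walk weight at most $w_i^{\ast}$. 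For the matching lower bound I would take an optimal initial walk reaching $1\colon m^{\ast}$ with $m^{\ast}\le k$ and extend it to $1\colon k$ by copies of the zero-weight loop $(1,1)$, which is present in every $A_l$ by Assumption~\ref{a:oneloop}; this produces a full walk from $i$ to $1$ of weight exactly $w_i^{\ast}$. Thus $\Gamma(k)_{i,1}=w_i^{\ast}$, and the symmetric argument, padding with loops at the front and ending with an optimal final walk, gives $\Gamma(k)_{1,j}=v_j^{\ast}$.

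Combining the three identities then gives $\Gamma(k)_{i,j}=w_i^{\ast}+v_j^{\ast}=\Gamma(k)_{i,1}+\Gamma(k)_{1,j}=\Gamma(k)_{i,1}\otimes\Gamma(k)_{1,j}$, which is exactly the asserted rank-one relation. The only delicate points are the finiteness of the entries, guaranteed by irreducibility, and the fact that the zero-weight loop at node $1$ is both present in every matrix and unbeatable among closed walks at node $1$; both follow from Assumptions~\ref{a:oneloop} and~\ref{a:suploop}, which is precisely why the single critical loop hypothesis is indispensable.
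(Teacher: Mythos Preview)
Your argument is correct and, for the first and decisive step, coincides with the paper's: you combine Lemma~\ref{l:bound-full} (optimal full walks through node $1$ have weight $w_i^{\ast}+v_j^{\ast}$) with Lemma~\ref{i:ineq} (full walks avoiding node $1$ have strictly smaller weight) to obtain $\Gamma(k)_{i,j}=w_i^{\ast}+v_j^{\ast}$.

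The difference lies in how the factors $\Gamma(k)_{i,1}$ and $\Gamma(k)_{1,j}$ are identified. The paper simply observes that $w_1^{\ast}=v_1^{\ast}=0$ and then applies the already derived identity $\Gamma(k)_{p,q}=w_p^{\ast}+v_q^{\ast}$ to the pairs $(i,1)$ and $(1,j)$, yielding $\Gamma(k)_{i,1}=w_i^{\ast}$ and $\Gamma(k)_{1,j}=v_j^{\ast}$ in one line. This is short, but it tacitly uses that condition~\eqref{main condition} also holds for the pairs $(i,1)$ and $(1,j)$, a fact which is true (since $v_1^{\ast}=\beta_1=0$, $\gamma_{i1}=-\infty$, and $v_j^{\ast}\le\beta_j$, so the bounds for $(i,1)$ and $(1,j)$ are dominated by that for $(i,j)$) but not spelled out. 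Your route instead proves $\Gamma(k)_{i,1}=w_i^{\ast}$ directly: splitting any full walk to node $1$ at its first visit to $1$ and bounding the remaining closed walk by $0$ via the cycle-mean structure of $A^{\sup}$, and exhibiting the matching lower bound by padding an optimal initial walk with $(1,1)$-loops. This is slightly longer but entirely self-contained, requires no side check on the bound for the auxiliary index pairs, and makes the role of Assumptions~\ref{a:oneloop} and~\ref{a:suploop} explicit. Either approach is acceptable; yours trades brevity for transparency.
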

\begin{Proof}
	As seen by Lemma \ref{i:ineq}, if
	\begin{align*}
		k > \frac{w_{i}^{\ast} + v_{j}^{\ast} - \gamma_{ij}}{\lambda^{\ast}} + (n-1)
	\end{align*}
	then any walk on $\trellis_{\Gamma(k)}$ not going through node $1$ will have weight smaller than $w_i^{\ast}+v_j^{\ast}$. By Lemma \ref{l:bound-full}, if 
	\begin{align*}
		k > \frac{w_{i}^{\ast} - \alpha_{i} + v_{j}^{\ast} - \beta_{j}}{\lambda^{\ast}} + 2(n-1)
	\end{align*}
	then any optimal full walk going through node $1$ will consist of the three parts $W_{1},W_{2}$ and $C$ as defined in the Lemma and its weight will be $w_{i}^{\ast} + v_{j}^{\ast}$.
	Hence if $k$ satisfies both inequalities then any optimal full walk goes through node$1$ and has weight
	\begin{align*}
		\Gamma(k)_{ij} = w_{i}^{\ast} + v_{j}^{\ast}
	\end{align*}
	
	Observe that  $w_{1}^{\ast}$ and $v_{1}^{\ast}$ are equal to $0$, since the~weight of any optimal initial or final walk on $\trellis_{\Gamma(k)}$ connecting $1$ to $1$ is $0$. Therefore
	\begin{align*}
	    \Gamma(k)_{i,1} & = w_{i}^{\ast}+ v_{1}^{\ast} = w_{i}^{\ast}, \\
	    \Gamma(k)_{1,j} & = w_{1}^{\ast}+ v_{j}^{\ast} = v_{j}^{\ast},
	\end{align*}
	 and
	\begin{align*}
		\Gamma(k)_{i,j} = \Gamma(k)_{i,1} + \Gamma(k)_{1,j}. 
	\end{align*}
\end{Proof}

\pagebreak

Let us extend Theorem~\ref{main theorem} to a~matrix form.

\begin{corollary} \label{m:form}
	If the matrix product $\Gamma(k)$ with length $k$ satisfies
	\begin{align*}
		k > \max_{i,j \in \nodes} \Big( \frac{w_{i}^{\ast} + v_{j}^{\ast} - \gamma_{ij}}{\lambda^{\ast}} + (n-1), \frac{w_{i}^{\ast} - \alpha_{i} + v_{j}^{\ast} - \beta_{j}}{\lambda^{\ast}} + 2(n-1)) \Big)
	\end{align*}
	for all $i,j \in \nodes$, then $\Gamma(k)$ is rank one and
	\[
    \Gamma(k) =
    \begin{bmatrix}
       \Gamma(k)_{1,1} \\
       \Gamma(k)_{2,1} \\
       \vdots \\
       \Gamma(k)_{n,1}
    \end{bmatrix} \otimes
    \begin{bmatrix}
    	\Gamma(k)_{1,1} & \Gamma(k)_{1,2} & \ldots & \Gamma(k)_{1,n}
	\end{bmatrix}.
	\]
\end{corollary}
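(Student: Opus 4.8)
The plan is to apply Theorem~\ref{main theorem} simultaneously to every pair of nodes. First I would observe that the threshold appearing in the corollary is exactly the maximum over all $i,j\in\nodes$ of the two quantities that make up condition~\eqref{main condition} of Theorem~\ref{main theorem}. Consequently, if $k$ exceeds this maximum, then for each \emph{individual} pair $(i,j)$ the hypothesis~\eqref{main condition} of the theorem is satisfied. This is the sole point at which the passage from one pair to all pairs is invoked, and it is precisely the reason the maximum over all pairs, rather than a pair-dependent bound, must appear in the statement.

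Next I would invoke Theorem~\ref{main theorem} once for each pair $(i,j)$ to obtain the entrywise identity
\begin{equation*}
\Gamma(k)_{i,j} = \Gamma(k)_{i,1} + \Gamma(k)_{1,j},
\end{equation*}
now valid for all $i,j\in\nodes$ at once, since a single value of $k$ clears every pairwise threshold.

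Finally, I would recognize this family of identities as the defining property of a max-plus outer product. Writing $\vec{x}$ for the first column of $\Gamma(k)$, with entries $x_i=\Gamma(k)_{i,1}$, and $\vec{y}^\top$ for the first row, with entries $y_j=\Gamma(k)_{1,j}$, the $(i,j)$ entry of $\vec{x}\otimes\vec{y}^\top$ is $x_i\otimes y_j=\Gamma(k)_{i,1}+\Gamma(k)_{1,j}$, which by the previous step equals $\Gamma(k)_{i,j}$. Hence $\Gamma(k)=\vec{x}\otimes\vec{y}^\top$ coincides with the displayed outer product and is therefore rank one.

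The main obstacle here is conceptual rather than computational: one must be sure the bound is \emph{uniform}, i.e.\ that a single $k$ forces the outer-product identity in every entry simultaneously. Because the corollary's threshold is the maximum over all pairs, this uniformity is automatic, and no genuine technical difficulty remains; the argument is essentially a bookkeeping step that lifts the entrywise Theorem~\ref{main theorem} to matrix form.
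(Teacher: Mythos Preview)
Your proposal is correct and follows essentially the same approach as the paper: apply Theorem~\ref{main theorem} to every pair $(i,j)$, use the fact that the corollary's threshold dominates each pairwise bound so the entrywise identity $\Gamma(k)_{i,j}=\Gamma(k)_{i,1}+\Gamma(k)_{1,j}$ holds for all $i,j$, and then recognize this as the max-plus outer product of the first column and first row of $\Gamma(k)$.
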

\begin{Proof}
	Using Theorem \ref{main theorem} for all $i,j \in \nodes$, if $k$ satisfies the~condition~\eqref{main condition} then
	\begin{align*}
	    \Gamma(k)_{i,j} = \Gamma(k)_{i,1} + \Gamma(k)_{1,j}.
	\end{align*}
	Since this applies for all $i,j \in \nodes$, $\Gamma(k)_{i,1}$ and $\Gamma(k)_{1,j}$ can be written as vectors in $\mathbb{R}^n$. Using the~max-plus outer product of~these two vectors it becomes
	\[
	\Gamma(k) =
	\begin{bmatrix}
		\Gamma(k)_{1,1} \\
	    \Gamma(k)_{2,1} \\
	    \vdots \\
	    \Gamma(k)_{n,1}
	\end{bmatrix} \otimes
	\begin{bmatrix}
		\Gamma(k)_{1,1} \\ \Gamma(k)_{1,2} \\ \vdots \\ \Gamma(k)_{1,n}
	\end{bmatrix}^{\top}
	\]
	thus proving the corollary.
\end{Proof}

\bigskip

The bounds of Theorem~\ref{main theorem} and Corollary~\ref{m:form} are interesting to see but they are implicit. This is because in order to calculate $w_{i}^{\ast}$ and $v_{j}^{\ast}$ you need to calculate $\Gamma(k)$ in which the~length of~the~product is dictated by the~bound using $w_{i}^{\ast}$ and $v_{j}^{\ast}$. However another bound can be derived from Theorem \ref{main theorem} using $A^{\inf}$. From the~definition of~$A^{\inf}$, $w_{i}$ and $v_{j}$ it is easy to see that for all $i,j \in \nodes$
\begin{align*}
    w_{i} \leq w_{i}^{\ast} \; \text{ and } \; v_{j} \leq v_{j}^{\ast}
\end{align*}
These inequalities, together with Theorem~\ref{main theorem}, imply the following results.

\begin{corollary} \label{c:bound}
	Let $\Gamma(k)$ be an~inhomogenous max-plus matrix product $\Gamma(k) = A_1 \otimes A_2 \otimes \ldots \otimes A_k$ with $k$ satisfying
	\begin{align*}
	k > \max \Big( \frac{w_{i} + v_{j} - \gamma_{ij}}{\lambda^{\ast}} + (n-1), \frac{w_{i} - \alpha_{i} + v_{j} - \beta_{j}}{\lambda^{\ast}} + 2(n-1)) \Big)
	\end{align*}
	for some $i,j\in\nodes$, then
	\begin{align*}
	    \Gamma(k)_{i,j} & = \Gamma(k)_{i,1} \otimes \Gamma(k)_{1,j} \\
	    & = \Gamma(k)_{i,1} + \Gamma(k)_{1,j}.
	\end{align*}
\end{corollary}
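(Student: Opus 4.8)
The plan is to deduce this corollary directly from Theorem~\ref{main theorem} by showing that its hypothesis is the stronger one, i.e. that the threshold on $k$ appearing here is at least as large as the one in~\eqref{main condition}. The only new facts needed beyond the theorem itself are the inequalities $w_i \leq w_i^*$ and $v_j \leq v_j^*$ recorded just above the statement, together with the sign of $\lambda^*$.

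First I would note that $\lambda^* < 0$; this follows from Assumptions~\ref{a:oneloop} and~\ref{a:suploop} in the same way as at the start of the proof of Lemma~\ref{l:bound-initial}. This negativity is what drives the whole argument, since dividing through by $\lambda^*$ reverses the direction of an inequality.

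Next I would compare the two numerators term by term. From $w_i \leq w_i^*$ and $v_j \leq v_j^*$ one gets both $w_i + v_j - \gamma_{ij} \leq w_i^* + v_j^* - \gamma_{ij}$ and $w_i - \alpha_i + v_j - \beta_j \leq w_i^* - \alpha_i + v_j^* - \beta_j$. Dividing each of these by $\lambda^* < 0$ flips the inequality, and then adding the constants $(n-1)$ and $2(n-1)$ respectively shows that each of the two threshold terms appearing in this corollary is at least the corresponding term in~\eqref{main condition}. Since the maximum of two quantities is nondecreasing in each argument, the overall bound stated here dominates the bound of Theorem~\ref{main theorem}.

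Finally I would close the argument: any $k$ exceeding the (larger) corollary threshold automatically exceeds the theorem threshold, so condition~\eqref{main condition} holds and Theorem~\ref{main theorem} applies verbatim to give $\Gamma(k)_{i,j} = \Gamma(k)_{i,1} + \Gamma(k)_{1,j}$, as required. The only point that demands care is the bookkeeping of inequality directions under division by the negative number $\lambda^*$; apart from that the proof is immediate.
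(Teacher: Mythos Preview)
Your proposal is correct and follows exactly the approach the paper indicates: it simply combines the inequalities $w_i\leq w_i^{\ast}$, $v_j\leq v_j^{\ast}$ recorded before the statement with $\lambda^{\ast}<0$ to see that the present bound dominates~\eqref{main condition}, and then invokes Theorem~\ref{main theorem}. The paper gives no proof beyond that one-line remark, so your write-up is essentially a fleshed-out version of the same argument.
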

We now also extend Corollary \ref{c:bound} to a~matrix form.
\begin{corollary} \label{c:bound2}
	Let $\Gamma(k)$ be an~inhomogenous max-plus matrix product $\Gamma(k) = A_1 \otimes A_2 \otimes \ldots \otimes A_k$ with $k$ satisfying
	\begin{align*}
	k > \max_{i,j \in \nodes} \Big( \frac{w_{i} + v_{j} - \gamma_{ij}}{\lambda^{\ast}} + (n-1), \frac{w_{i} - \alpha_{i} + v_{j} - \beta_{j}}{\lambda^{\ast}} + 2(n-1)) \Big)
	\end{align*}
	then $\Gamma(k)$ is rank one and
	\[
	\Gamma(k) =
	\begin{bmatrix}
		\Gamma(k)_{1,1} \\
	    \Gamma(k)_{2,1} \\
	    \vdots \\
	    \Gamma(k)_{n,1}
	\end{bmatrix} \otimes
	\begin{bmatrix}
		\Gamma(k)_{1,1} & \Gamma(k)_{1,2} & \ldots & \Gamma(k)_{1,n}
	\end{bmatrix}.
	\]
\end{corollary}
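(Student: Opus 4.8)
The plan is to reduce this matrix-form statement to the entrywise factorization already guaranteed by Corollary~\ref{c:bound}, in exactly the same manner that Corollary~\ref{m:form} was obtained from Theorem~\ref{main theorem}. Essentially all of the analytic work has already been done in Lemmas~\ref{l:bound-full} and~\ref{i:ineq} and packaged into Corollary~\ref{c:bound}; what remains is a quantifier argument and a formal identification with the outer-product form.

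First I would observe that the hypothesis here takes the maximum over all pairs $i,j\in\nodes$. Consequently, whenever $k$ exceeds this maximum it automatically exceeds the per-pair threshold
\[
\max\Big(\frac{w_i+v_j-\gamma_{ij}}{\lambda^{\ast}}+(n-1),\ \frac{w_i-\alpha_i+v_j-\beta_j}{\lambda^{\ast}}+2(n-1)\Big)
\]
for every individual choice of $i$ and $j$. This is the single point where the quantifiers must be handled with care: Corollary~\ref{c:bound} is stated for \emph{some} $i,j$, whereas here we need it to hold for \emph{all} $i,j$ simultaneously, and it is precisely the outer $\max_{i,j}$ that guarantees that one and the same value $k$ works uniformly across every entry of the product.

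Next I would invoke Corollary~\ref{c:bound} for each pair $(i,j)$ to conclude
\[
\Gamma(k)_{i,j}=\Gamma(k)_{i,1}+\Gamma(k)_{1,j}\qquad\text{for all }i,j\in\nodes .
\]
Finally I would recognize this family of scalar identities as the defining relation of a max-plus outer product. Taking the column vector $\vec{x}$ with entries $x_i=\Gamma(k)_{i,1}$ and the row vector $\vec{y}^{\top}$ with entries $y_j=\Gamma(k)_{1,j}$, the $(i,j)$ entry of $\vec{x}\otimes\vec{y}^{\top}$ equals $x_i+y_j$ — a single summand, so no maximum is actually taken — which coincides with $\Gamma(k)_{i,j}$. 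Hence $\Gamma(k)$ is exactly the stated outer product, and in particular it is rank one.

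I do not anticipate a genuine obstacle in this corollary, as the substantive content lies upstream in the lemmas and in Corollary~\ref{c:bound}. The only mildly delicate step is the quantifier bookkeeping above, namely confirming that exceeding the $\max_{i,j}$ forces the per-pair bound to hold for each $(i,j)$ with a common $k$; everything else is the routine verification that the entrywise factorization is equivalent to the outer-product representation.
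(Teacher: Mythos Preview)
Your proposal is correct and follows exactly the route the paper takes: it obtains Corollary~\ref{c:bound2} from Corollary~\ref{c:bound} by the same quantifier-and-outer-product argument used to derive Corollary~\ref{m:form} from Theorem~\ref{main theorem}. The paper in fact presents Corollary~\ref{c:bound2} without a separate proof, simply noting that it is the matrix-form extension of Corollary~\ref{c:bound}, so your write-up is if anything more explicit than the original.
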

\smallskip

Note that this bound is explicit, and in particular it can be found numerically without having to calculate $\Gamma(k)$ beforehand. This is a bound for the rank-one transient of inhomogeneous products.

\section{An example}
To illustrate what has been achieved in the paper let us consider an~example. Let $D_A$ be a~digraph consisting of five nodes with the~generalised associated weight matrix (for convention let $\varepsilon = -\infty$),
\[
A =
\begin{bmatrix}
	a_{1,1} & a_{1,2} & a_{1,3} & \varepsilon & \varepsilon \\
    a_{2,1} & \varepsilon & \varepsilon & \varepsilon & a_{2,5} \\
    \varepsilon & \varepsilon & \varepsilon & a_{3,4} & \varepsilon \\
    \varepsilon & a_{4,2} & \varepsilon & \varepsilon & \varepsilon \\
    a_{5,1} &\varepsilon & \varepsilon & a_{5,4} & \varepsilon
\end{bmatrix},
\]
where $a_{i,j} \in \Rmax$. Consider the~set $\mathcal{X} = \{A_{1}, A_{2}, A_{3}\}$ where

\[
A_{1} =
\begin{bmatrix}
	0 & -1 & -2 & \varepsilon & \varepsilon \\
    -3 & \varepsilon & \varepsilon & \varepsilon & -3 \\
    \varepsilon & \varepsilon & \varepsilon & -4 & \varepsilon \\
    \varepsilon & -5 & \varepsilon & \varepsilon & \varepsilon \\
    -6 &\varepsilon & \varepsilon & -5 & \varepsilon
\end{bmatrix},
\]
\[
A_{2} =
\begin{bmatrix}
	0 & -4 & -3 & \varepsilon & \varepsilon \\
    -4 & \varepsilon & \varepsilon & \varepsilon & -3 \\
    \varepsilon & \varepsilon & \varepsilon & -2 & \varepsilon \\
    \varepsilon & -1 & \varepsilon & \varepsilon & \varepsilon \\
    -1 &\varepsilon & \varepsilon & 1 & \varepsilon
\end{bmatrix},
\]
\[
A_{3} =
\begin{bmatrix}
	0 & 2 & -4 & \varepsilon & \varepsilon \\
    -5 & \varepsilon & \varepsilon & \varepsilon & -6 \\
    \varepsilon & \varepsilon & \varepsilon & -4 & \varepsilon \\
    \varepsilon & -3 & \varepsilon & \varepsilon & \varepsilon \\
    -2 &\varepsilon & \varepsilon & 2 & \varepsilon
\end{bmatrix}.
\]
It can be seen that these satisfy the~assumptions with the top left entry of each matrix being zero. Using these we can calculate the~coarser bounds of Corollaries~\ref{c:bound} and~\ref{c:bound2}. In order to do that we need $A^{\sup}$ and $A^{\inf}$, which are
\[
A^{\sup} =
\begin{bmatrix}
	0 & 2 & -2 & \varepsilon & \varepsilon \\
    -3 & \varepsilon & \varepsilon & \varepsilon & -3 \\
    \varepsilon & \varepsilon & \varepsilon & -2 & \varepsilon \\
    \varepsilon & -1 & \varepsilon & \varepsilon & \varepsilon \\
    -1 &\varepsilon & \varepsilon & 2 & \varepsilon
\end{bmatrix} \quad \text{and} \quad
A^{\inf} =
\begin{bmatrix}
	0 & -4 & -4 & \varepsilon & \varepsilon \\
    -5 & \varepsilon & \varepsilon & \varepsilon & -6 \\
    \varepsilon & \varepsilon & \varepsilon & -4 & \varepsilon \\
    \varepsilon & -5 & \varepsilon & \varepsilon & \varepsilon \\
    -6 &\varepsilon & \varepsilon & -5 & \varepsilon
   \end{bmatrix}.
\]
We now begin to calculate the bounds of Corollaries~\ref{c:bound} and~\ref{c:bound2}. The only cycle that does not got through node $1$ is $(2 \to 5 \to 4 \to 2)$ which has average weight $\lambda^{\ast}=-\frac{2}{3}$. Using $A^{\sup}$ we get $\alpha_{i}$, $\beta_{j}$ and $\gamma_{i,j}$ as the entries of
\[
\alpha =
\begin{bmatrix}
	0 \\
    -3 \\
    -6 \\
    -4 \\
    -1
\end{bmatrix}, \;
\beta =
\begin{bmatrix}
	0 \\
    2 \\
    -2 \\
    1 \\
    -1
\end{bmatrix}, \;
\gamma =
\begin{bmatrix}
	\varepsilon& \varepsilon & \varepsilon & \varepsilon & \varepsilon \\
    \varepsilon & -2 & \varepsilon & -1 & -3 \\
    \varepsilon & -3 & \varepsilon & -2 & -6 \\
    \varepsilon & -1 & \varepsilon & -2 & -4 \\
    \varepsilon & 1 & \varepsilon & 2 & -2
\end{bmatrix}.
\]
Using $A^{\inf}$ we can also calculate $w_{i}$ and $v_{j}$ as the
entries of
\[
w =
\begin{bmatrix}
	0 \\
    -4 \\
    -13 \\
    -9 \\
    -6
\end{bmatrix}, \;
v =
\begin{bmatrix}
	0 \\
    -5 \\
    -4 \\
    -8 \\
    -10
\end{bmatrix}.
\]
With these pieces we can construct the bounds for $k$ for each combination of $i$ and $j$:
\begin{align*}
	k > \max_{i,j\in \nodes} \left(
    \begin{bmatrix}
	    \varepsilon & \varepsilon & \varepsilon & \varepsilon & \varepsilon \\
	    \varepsilon & 14.5 & \varepsilon & 20.5 & 20.5 \\
	    \varepsilon & 26.5 & \varepsilon & 32.5 & 29.5 \\
	    \varepsilon & 23.5 & \varepsilon & 26.5 & 26.5 \\
	    \varepsilon & 22 & \varepsilon & 28 & 25
    \end{bmatrix},
    \begin{bmatrix}
	    8 & 18.5 & 11 & 21.5 & 21 \\
	    9 & 20 & 12.5 & 23 & 23 \\
	    18 & 29 & 21.5 & 32 & 32 \\
	    15 & 26 & 18.5 & 29 & 29 \\
	    15.5 & 26 & 18.5 & 29 & 29
    \end{bmatrix}
    \right) \Leftrightarrow k > 32.
\end{align*}
This means that if a~matrix product $\Gamma(k)$ has length greater then $32$ then it will be rank-one.
Let us now take a random product of length $44$:
\begin{align*}
    \Gamma(k) & = A_{1} \otimes A_{3} \otimes A_{1} \otimes A_{2} \otimes A_{3} \otimes A_{1} \otimes A_{2} \otimes A_{2} \otimes A_{1} \otimes A_{3} \otimes A_{1} \\
    & \otimes A_{2} \otimes A_{1} \otimes A_{2} \otimes A_{3} \otimes A_{3} \otimes A_{1} \otimes A_{2} \otimes A_{1} \otimes A_{1} \otimes A_{3} \otimes A_{2} \\
    & \otimes A_{3} \otimes A_{2} \otimes A_{2} \otimes A_{3} \otimes A_{1} \otimes A_{1} \otimes A_{2} \otimes A_{3} \otimes A_{2} \otimes A_{1} \otimes A_{3} \\
    & \otimes A_{1} \otimes A_{2} \otimes A_{3} \otimes A_{1} \otimes A_{3} \otimes A_{3} \otimes A_{1} \otimes A_{2} \otimes A_{2} \otimes A_{1} \otimes A_{1}.
\end{align*}
We obtain that
\[
\Gamma(k) =
\begin{bmatrix}
	0 & -1 & -2 & -6 & -4 \\
    -3 & -4 & -5 & -9 & -7 \\
    -10 & -11 & -12 & -16 & -14 \\
    -10 & -11 & -12 & -16 & -14 \\
    -6 & -7 & -8 & -12 & -10
\end{bmatrix}.
\]
We see that $\Gamma(k)=w_{i}^{\ast} \otimes (v_{j}^{\ast})^{\top} = \Gamma(k)_{i,1} \otimes (\Gamma(k)_{1,j})^{\top}$ where
\[
w^{\ast} =
\begin{bmatrix}
	0 \\
    -3 \\
    -10 \\
    -10 \\
    -6
\end{bmatrix}, \;
v^{\ast} =
\begin{bmatrix}
	0 \\
    -1 \\
    -2 \\
    -6 \\
    -4
\end{bmatrix}.
\]
Note that the bound appearing in Corollary~\ref{m:form} is equal to
\begin{align*}
	\max_{i,j\in \nodes} \left(
	\begin{bmatrix}
     \varepsilon & \varepsilon & \varepsilon & \varepsilon & \varepsilon \\
     \varepsilon & 7 & \varepsilon & 16 & 10 \\
     \varepsilon & 16 & \varepsilon & 25 & 16 \\
     \varepsilon & 19 & \varepsilon & 25 & 19 \\
     \varepsilon & 16 & \varepsilon & 25 & 16
    \end{bmatrix},
    \begin{bmatrix}
	    8 & 12.5 & 8 & 18.5 & 12.5 \\
	    8 & 12.5 & 8 & 18.5 & 12.5 \\
	    14 & 18.5 & 14 & 24.5 & 18.5 \\
	    17 & 21.5 & 17 & 27.5 & 21.5 \\
	    15.5 & 20 & 15.5 & 26 & 20
    \end{bmatrix}
    \right) = 27.5,
\end{align*}
which is smaller than the~coarser bound $33$.
\medskip
\section*{Acknowledgement}
\small
This work was supported by EPSRC Grant EP/P019676/1. We would like to thank Dr. Oliver Mason for giving us an~idea for this paper. We are also grateful to Dr. Glenn Merlet and our anonymous referees for their careful reading, useful suggestions and advice.

\makesubmdate

\makecontacts

\end{document}